\newtheorem{thm}{Theorem}[section]
\newtheorem{cor}[thm]{Corollary}
\newtheorem{defin}[thm]{Definition}
\newtheorem{lemma}[thm]{Lemma}
\newtheorem{prop}[thm]{Proposition}
\newcommand{\bdd}{\mbox{$\partial$}}
\newcommand{\aaa}{\mbox{$\alpha$}}
\newcommand{\czero}{\mbox{$c \times \{ 0 \}$}}
\newcommand{\cone}{\mbox{$c \times \{ 1 \}$}}
\newcommand{\Fzero}{\mbox{$F \times \{ 0 \}$}}
\newcommand{\Fone}{\mbox{$F \times \{ 1 \}$}}
\newcommand{\Fz}{\mbox{$(F \times I)_{surg}$}}
\newcommand{\Fempty}{\mbox{$(F \times I)_{\emptyset}$}}
\newcommand{\FSempty}{\mbox{$(F \times S^1)_{\emptyset}$}}
\newcommand{\FSz}{\mbox{$(F \times S^1)_{surg}$}}
\newcommand{\Fp}{\mbox{$(F \times I)^{+}$}}
\newcommand{\Fpz}{\mbox{$(F \times I)^{+}_{surg}$}}
\newcommand{\Fpempty}{\mbox{$(F \times I)^{+}_{\emptyset}$}}
\begin{document}


\keywords {Dehn surgery, taut sutured manifold}

\thanks{Research partially supported by NSF grants.}

\title{Surgery on a knot in  Surface $\times$ I }

\author{Martin Scharlemann}
\address{\hskip-\parindent
Martin Scharlemann\\
Mathematics Department \\
University of California, Santa Barbara \\
Santa Barbara, CA 93117, USA}
\email{mgscharl@math.ucsb.edu}

\author{Abigail Thompson}
\address{\hskip-\parindent
Abigail Thompson\\
Mathematics Department \\
University of California, Davis\\
Davis, CA 95616, USA}
\email{thompson@math.ucdavis.edu}

\date{\today}

\begin{abstract}  Suppose $F$ is a compact orientable surface, $K$ is a knot in $F \times I$, and $(F \times I)_{surg}$ is the $3$-manifold obtained by some non-trivial surgery on $K$.   If $F \times \{ 0 \}$ compresses in $(F \times I)_{surg}$, then there is an annulus in $F \times I$ with one end $K$ and the other end an essential simple closed curve in $F \times \{ 0 \}$.  Moreover, the end of the annulus at $K$ determines the surgery slope.  

An application:  suppose $M$ is a  compact orientable $3$-manifold that fibers over the circle.  If surgery on $K \subset M$ yields a reducible manifold, then either 
\begin{itemize}
\item the projection $K \subset M \to S^1$ has non-trivial winding number,
\item $K$ lies in a ball,
\item $K$ lies in a fiber, or
\item $K$ is cabled
\end{itemize}
 \end{abstract}

\maketitle

The study of Dehn surgery on knots in $3$-manifolds has a long and rich history, interacting in a deep way with 
\begin{itemize}
\item sophisticated combinatorics (\cite{GL},  \cite{CGLS}),
\item the theory of character varieties (\cite{CGLS}, \cite{BGZ}), and
\item sutured manifold theory (\cite{Ga1}, \cite{Sch})
\end{itemize}

It is pleasing then to find a result that is simple to state, easy to understand and yet  has so far escaped explicit notice.  Yi Ni has pointed out that there is at least implicit overlap of our results with \cite[Theorem 1.4 and Section 3]{Ni}.  

\begin{thm} \label{thm:main} Suppose $F$ is a compact orientable surface, $K$ is a knot in $F \times I$, and $(F \times I)_{surg}$ is the $3$-manifold obtained by some non-trivial surgery on $K$. If $F \times \{ 0 \}$ compresses in $(F \times I)_{surg}$, then $K$ is parallel to an essential simple closed curve in $F \times \{ 0 \}$.   Moreover, the annulus that describes the parallelism determines the slope of the surgery.
\end{thm}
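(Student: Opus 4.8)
The plan is to reduce the statement to a question about the knot exterior and then apply sutured manifold theory, in the form that controls how Dehn filling a torus boundary component affects tautness.

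First I would pass to the exterior $X = (F \times I) \setminus \operatorname{int} N(K)$. Since $F \times I$ is irreducible and (assuming $F$ is neither a disk nor a sphere, the only cases in which the statement is vacuous) $\partial$-irreducible, $X$ is irreducible and $F \times \{0\}$ is incompressible in $X$; any compression of $F \times \{0\}$ in $X$ would already compress it in $F \times I$. The surgery is the Dehn filling of the torus $T = \partial N(K)$ along the surgery slope $\alpha$, and the hypothesis says exactly that $F \times \{0\}$, incompressible in $X$, becomes compressible after this filling. Concretely, a compressing disk $D$ for $F \times \{0\}$ in $(F \times I)_{surg}$ may be isotoped to meet the filling solid torus in meridian disks, so that $P = D \cap X$ is a planar surface whose boundary is one essential curve $c \subset F \times \{0\}$ together with $q \ge 1$ copies of $\alpha$ on $T$. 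The whole theorem amounts to showing that $P$ can be taken to be an annulus, i.e. that $q = 1$.

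To obtain this I would equip $X$ with a taut sutured manifold structure, with $R_- = F \times \{0\}$ and $R_+ = F \times \{1\}$, annular sutures on $\partial F \times I$, and the slope $\alpha$ encoded by sutures on $T$; tautness holds because $F \times \{0,1\}$ are incompressible and Thurston-norm minimizing, being parallel copies of the incompressible surface $F$ in the product. The key input is then the theorem of Gabai, in the refined form due to Scharlemann, on sutured manifold hierarchies: filling $T$ preserves tautness, and hence keeps $F \times \{0\}$ incompressible, unless the filling slope is the boundary slope of an essential annulus in $X$ running from $T$ to $R_\pm$. Since by hypothesis tautness is destroyed, such an annulus $A$ must exist, with one boundary component an essential curve in $F \times \{0\}$ and the other a curve of slope exactly $\alpha$ on $T$. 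I expect this to be the main obstacle: extracting a genuine annulus of the precise surgery slope, rather than merely some norm-reducing or higher-genus obstruction, is where the hierarchy machinery does all the work, and it is equivalent to the combinatorial statement that the compressing disk meets $T$ exactly once.

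It remains to convert $A$ into the asserted parallelism and to pin down the slope. The first task is to show that the boundary curve $A \cap T$ is a longitude of $K$, meeting the meridian exactly once. Here the product structure is decisive: $A$ is essential in $X \subset F \times I$, so $A \cap T$ is essential in $F \times I$ and is carried to $\pm[c]$ in $H_1(F \times I) = H_1(F)$; since a simple closed curve on $F$ is homologically primitive or trivial, a cabling-type annulus, whose $T$-slope would meet the meridian more than once, cannot occur, the remaining homologically trivial case being excluded using essentiality of $A$ and non-triviality of the surgery. Capping $A$ off across $N(K)$ along this longitude then produces an annulus in $F \times I$ with one end $K$ and the other the essential curve $c \subset F \times \{0\}$, which is the required parallelism. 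Finally, the framing this annulus induces on $K$ is precisely the slope $A \cap T = \alpha$, so the parallelism annulus determines the surgery slope, as claimed.
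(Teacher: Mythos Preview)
Your central step is asserted rather than proved, and it is precisely the content of the theorem. You claim that sutured manifold theory, ``in the refined form due to Scharlemann,'' says that if filling $T$ destroys tautness then the filling slope bounds an essential annulus in $X$ running from $T$ to $R_\pm$. No such theorem is available off the shelf. Gabai's result (\cite[Corollary~2.4]{Ga2}) says that tautness survives all but one filling slope, but it does not hand you an annulus at the exceptional slope; and its proof requires that the norm-test surface have boundary only on torus boundary components, which fails here once $F$ has boundary or the compressing disk has boundary on $F\times\{0\}$. The paper identifies exactly this as the obstruction and spends Sections~1--2 circumventing it: it doubles to $F\times S^1$ to create a reducing sphere, invokes \cite{Sch} on a handle-attached manifold to force $F\times\{1\}$ to compress as well, and then uses Gabai (now in a setting with only torus boundary) to locate an essential spanning annulus or square in $F\times I$ that is \emph{disjoint from} $K$---not an annulus from $\partial N(K)$ to $F\times\{0\}$. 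Cutting along that surface lowers $\operatorname{rank} H_1(F)$ and sets up an induction whose base case is the solid torus / $T^2\times I$ case handled directly by \cite{Ga3}.

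Your closing homological argument also has a gap: you note that $c$ could be homologically trivial (separating) and then dismiss this case with ``essentiality of $A$ and non-triviality of the surgery,'' but an essential annulus from $T$ to a separating curve with $T$-slope a non-longitudinal cable is not immediately ruled out by those hypotheses. In the paper this issue never arises, because the annulus you want is produced only at the very end, in the base case $F=$ annulus, where it is forced by the collar structure of $A\times I - \eta(K)$.
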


An important precursor to this theorem is \cite[Proposition 4.6]{BGZ}.  This proposition examines the same sort of question, but restricted to the case in which $F$ is closed, and concludes that the slope of the surgery must be integral.  It does not directly comment on the position of the knot in $F \times I$, though the proof itself comes close: the proof of that proposition offers our conclusion (that  $F \times I - \eta(K)$ is what the authors call a ``hollow product'') as one of two possibilities.  Our argument is independent of this result, resting entirely on central sutured manifold results from \cite{Ga2}, \cite{Ga3} and \cite{Sch}.

\medskip

First some notation:

\begin{defin} Let $F$ be an orientable surface.  A simple closed curve in $F$ is {\bf trivial} if it bounds a disk in $F$.  A properly embedded arc  (resp. non-trivial simple closed curve) $\aaa \subset F$ is {\bf essential} if it is not parallel to an arc in $\bdd F$ (resp. component of $\bdd F$).  

An annulus $S^1 \times (I, \bdd I) \subset F \times (I, \bdd I)$ is an {\bf  essential spanning annulus} in $F \times I$ if it is properly isotopic to $\aaa \times I$, for some essential simple closed curve $\aaa$ in $F$. 

A square $(I, \bdd I) \times (I, \bdd I) \subset (F, \bdd F) \times (I, \bdd I)$ is an {\bf  essential spanning square} in $F \times I$ if it is properly isotopic to $\aaa \times I$, for some essential properly embedded arc $\aaa$ in $F$. 

\end{defin}

For example, if a fundamental class $[\aaa ] \in H_1(F, \bdd F)$ of a properly embedded arc $\aaa \subset F$ is non-trivial then $\aaa$ is essential.  In fact, by Poincare duality, $\aaa$ is non-separating in $F$.

\bigskip

\noindent {\small {\bf Acknowledgement:} We would like to thank Cameron Gordon and Ying-Qing Wu for helpful comments on a preliminary draft.}

\section{A review of Gabai: When $\chi(F) = 0$}

Consider the following special case of a theorem of Gabai \cite{Ga3}:

\begin{prop}  \label{prop:annulus} For $A$ an annulus, suppose $K$ is a knot in $A \times I$.  Let $(A \times I)_{surg}$ be the manifold obtained as the result of some non-trivial Dehn surgery on $K$.  If $A \times \{ 0 \}$ compresses in $(A \times I)_{surg}$, then $K$ is parallel in $A \times I$ to the core curve $\aaa$ of $A \times \{ 0 \}$.  Moreover, the annulus that describes the parallelism determines the slope of the Dehn surgery.
\end{prop}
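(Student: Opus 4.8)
The plan is to exploit the fact that $A \times I$ is just a solid torus $V$, in which $A \times \{0\}$ is an incompressible boundary annulus whose core $\alpha$ is a longitude of $V$, and then to feed the exterior $M = V - \eta(K)$ into the sutured-manifold machinery of \cite{Ga2}, \cite{Ga3}, \cite{Sch}. First I would convert the hypothesis into a statement about a surface in $M$. A compressing disk for $A \times \{0\}$ in $(A \times I)_{surg}$ can be isotoped to meet the surgery solid torus in meridian disks, and deleting these produces a planar surface $P \subset M$ whose boundary is one essential curve $c \subset A \times \{0\}$, necessarily isotopic to $\alpha$ (a longitude $\ell$ of $V$), together with $n \geq 0$ copies of the surgery slope $\gamma$ on $\partial \eta(K)$. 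Minimizing $n$ and using that $A \times \{0\}$ is already incompressible in the solid torus $V$ forces $n \geq 1$; non-triviality of the surgery guarantees $\gamma$ is not the meridian of $K$.

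Next I would install a taut sutured-manifold structure. The product $A \times I$ carries its product structure, with $R_\pm = A \times \partial I$ and annular suture $(\partial A) \times I$, and this is taut; here $R_\pm$ are annuli, hence norm-minimizing ($\chi = 0$). I would transport this to $M$, putting appropriate sutures on $\partial \eta(K)$ that record the slope $\gamma$, so that Dehn filling along $\gamma$ recovers a sutured manifold extending the product structure. The point is that $P$ records exactly how tautness of this structure can be destroyed by the filling: if $R_- = A \times \{0\}$ compresses after filling, the filled sutured manifold is no longer taut.

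Then I would invoke the central result that filling a taut sutured manifold along a slope on a torus boundary component preserves tautness for \emph{every} slope except at most one, and that the exceptional slope is detected by a surface meeting that torus, exactly as $P$ does. Since a compression of $A \times \{0\}$ exhibits the filled manifold as non-taut, $\gamma$ must be this unique exceptional slope. A homology calculation in $M$ already constrains matters sharply: the longitude $\ell$ can become null-homologous only when $K$ has winding number $\pm 1$ and $\gamma$ is integral. The sutured decomposition then upgrades this to the geometric conclusion, producing a product region that isotopes $K$ onto $\alpha$, i.e.\ an annulus of parallelism; reading the framing off that annulus identifies $\gamma$ (surgery along the annulus framing is precisely what lets $\ell$ bound a disk).

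The hard part, I expect, is this last upgrade. Homology and the exceptional-slope count alone are not enough: a winding-number-one knot need not be a core (e.g.\ a core with a local knot tied in), so the real content is running the sutured-manifold hierarchy — or quoting Gabai's solid-torus surgery theorem from \cite{Ga3}, of which this proposition is a special case — to rule out such knotted possibilities and to certify that the unique bad slope is the annulus framing rather than some other integral slope. Packaging the product structure on $A \times I$ so that the decomposition stays taut at each stage, and tracking the boundary slope of $P$ through the hierarchy, is where the care is needed.
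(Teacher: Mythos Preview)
Your plan is not wrong, but it is considerably more elaborate than necessary, and in the end you defer the decisive step to the same source the paper simply quotes. The paper's argument is short: the homology of the compression shows that $[K]$ generates $H_1(A\times I)$ and that the surgery slope is a longitude, so $H_1((A\times I)_{surg})$ has no finite cyclic summand; then \cite[Theorem~1.1]{Ga3} applies directly, and the only surviving alternative there is that $K$ is a \emph{braid} in the solid torus $A\times I$. A braid of winding number one is a core, so $A\times I-\eta(K)$ is a collar of $\partial\eta(K)$, and the surgery slope is forced. Your construction of the planar surface $P$, the sutured structure with sutures on $\partial\eta(K)$ recording $\gamma$, and the exceptional-slope count are all correct observations, but none of that machinery is actually cashed in: you yourself note that the passage from ``winding number one'' to ``core'' is the hard part and point to \cite{Ga3} to do it. The paper's route buys you exactly that step with no extra scaffolding, by using the specific dichotomy in \cite[Theorem~1.1]{Ga3} (braid versus a torsion summand in $H_1$) rather than the generic ``tautness survives all but one filling'' statement. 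If you want your own argument to stand without quoting \cite{Ga3}, you would need to actually run the hierarchy and show the product region appears; as written, that step is only gestured at.
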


\begin{proof} The generating homology class $[\aaa]$ of $H_1(A \times \{ 0 \})$ is non-trivial in $H_1(A \times I - K)$ but trivial in $H_1((A \times I)_{surg})$, so a simple homology argument shows that the fundamental class $[K]$ is a generator of $H_1(A \times I)$ and that the surgery slope is some longitude of $K$.  Thus $H_1((A \times I)_{surg})$ does not contain a finite summand, so only the first conclusion of \cite[Theorem 1.1]{Ga3} is possible: $K$ is a braid in the solid torus $A \times I$.  But the only braid with winding number $1$ in a solid torus is a core of the solid torus, so in fact $A \times I - \eta(K)$ is just a collar of $\bdd \eta(K)$.   Given this collar structure, the only way that $A \times \{ 0 \}$ could compress after Dehn surgery on $K$ (i. e. after attaching a solid torus to $\bdd \eta(K) \subset A \times I - \eta(K)$) is if the surgery slope is parallel, via the collar $A \times I - \eta(K)$, to the core of $A \times \{ 0 \}$.  
\end{proof}

Another theorem of Gabai gives an analogous theorem for a torus $T$:

\begin{prop}  \label{prop:torus} Suppose $K$ is a knot in $T \times I$.  Let $(T \times I)_{surg}$ be the manifold obtained as the result of non-trivial Dehn surgery on $K$.  If $T \times \{ 0 \}$ compresses in $(T \times I)_{surg}$, then $K$ is parallel to an essential simple closed curve in $T \times \{ 0 \}$.  Moreover, the annulus that describes the parallelism determines the slope of the surgery.
\end{prop}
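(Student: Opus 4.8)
The plan is to follow the same two-part strategy as in Proposition~\ref{prop:annulus}: first pin down the homology of the situation, and then feed the resulting constraints into the relevant theorem of Gabai, this time the one governing knots in $T \times I$. Throughout let $\gamma \subset T \times \{ 0 \}$ be the essential simple closed curve that compresses, let $X = T \times I - \eta(K)$ be the exterior, and let $V$ be the filling solid torus, so that $(T \times I)_{surg} = X \cup V$.

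First I would run the homology argument. Since $\gamma$ is an essential simple closed curve on a torus, its class is primitive in $H_1(T \times \{ 0 \}) \cong \mathbb{Z}^2$, and the inclusion $T \times \{ 0 \} \hookrightarrow T \times I$ carries it isomorphically onto a primitive class $[\gamma] \in H_1(T \times I) \cong \mathbb{Z}^2$. The hypothesis that $\gamma$ compresses says that $[\gamma] = 0$ in $H_1((T \times I)_{surg}) = H_1(X)/\langle [\sigma] \rangle$, where $\sigma$ is the surgery slope; hence $[\gamma] = m[\sigma]$ in $H_1(X)$ for some integer $m$. I then push this relation forward under the surjection $H_1(X) \twoheadrightarrow H_1(T \times I)$ obtained by filling the knot back in, which kills the meridian $\mu$ and sends the longitude $\lambda$ to $[K]$. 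Writing $\sigma = p\mu + q\lambda$, I obtain $[\gamma] = mq\,[K]$ in $\mathbb{Z}^2$. Primitivity of $[\gamma]$ forces $|mq| = 1$, so $[K] = \pm[\gamma]$ is itself primitive and, crucially, $q = \pm 1$: the surgery slope is a longitude of $K$. In particular $[K] \neq 0$, and $H_1((T \times I)_{surg})$ carries no finite cyclic summand, exactly as in the annulus case.

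With the slope now known to be a longitude, I would invoke the torus analogue of Gabai's theorem (the ``another theorem of Gabai'' from \cite{Ga3}). The primitivity of $[K]$ excludes the alternative that $K$ lies in a ball (there $[K] = 0$), and the absence of finite torsion in $H_1$ should exclude the remaining outputs exactly as the braid alternative is isolated in Proposition~\ref{prop:annulus}, leaving only the conclusion that $K$ is a braid in $T \times I$ relative to a product fibering $T \times I \cong S^1 \times A$. Choosing the fibering so that the $S^1$ factor carries the primitive class $[K]$, the winding number of $K$ is $\pm 1$, and the only such braid is a core; thus $K$ is isotopic to a curve $c \times \{ \mathrm{pt} \}$ lying in a horizontal torus, with $c$ an essential simple closed curve representing $[K] = \pm[\gamma]$. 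I expect this to be the main obstacle: correctly citing the $\chi(T) = 0$ form of Gabai's surgery theorem and verifying that each of its alternative conclusions is ruled out by the homological constraints just established.

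Finally I would close with a direct computation in the exterior. Identifying $T = c \times c'$ and deleting a neighbourhood of $K = c \times \{ x_0 \}$ (a single point $x_0$ of the annulus $c' \times I$) exhibits $X \cong c \times P$, where $P$ is the pair of pants obtained from $c' \times I$ by removing an open disk; the three boundary tori $c \times \partial_i P$ are $T \times \{ 0 \}$, $T \times \{ 1 \}$, and $\partial \eta(K)$. Filling $\partial \eta(K)$ along $\sigma$ can compress $T \times \{ 0 \} = c \times \partial_0 P$ only when $\sigma$ is the fibre slope $c \times \{ \mathrm{pt} \}$: taking an arc $\delta \subset P$ from $\partial_0 P$ to the deleted boundary, the vertical annulus $c \times \delta$ then caps off against the meridian disk of $V$ to yield a compressing disk bounded by the essential curve $c$. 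For any other slope the Seifert fibering of $c \times P$ extends across $V$, giving a fibering over an annulus with at most one exceptional fibre and hence incompressible boundary, so the fibre slope is the unique one that works. This fibre slope is precisely the longitude isolated by the homology, and it is exactly the slope recorded by the parallelism annulus $c \times I$ between $K$ and $c \times \{ 0 \}$. Both conclusions follow: $K$ is parallel to the essential curve $c \times \{ 0 \}$, and that annulus determines the surgery slope.
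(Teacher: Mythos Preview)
Your homology paragraph is correct: it cleanly gives that $[K]$ is primitive in $H_1(T \times I)$ and that the surgery slope is longitudinal. The final $c \times P$ computation is also fine and is essentially how one concludes once $K$ is known to be a core curve. The gap is exactly where you flagged it. There is no directly citable ``$T \times I$ version'' of \cite{Ga3}; that paper treats only knots in a solid torus, and the phrase ``another theorem of Gabai'' in the text is introducing Proposition~\ref{prop:torus} itself as a consequence of Gabai's machinery, not pointing to a statement already in \cite{Ga3}. So the step ``$K$ is a braid in some product structure $T \times I \cong S^1 \times A$'' cannot be obtained by citation, and even if such a result were available you would still have to control \emph{which} product structure it hands you before reading off winding number~$\pm 1$.

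The paper closes this gap with \cite{Ga2} rather than \cite{Ga3}. Knowing $[K] \neq 0$, choose $c \subset T$ essential with $[K]$ a multiple of $[c]$; the spanning annulus $c \times I$ then has trivial algebraic intersection with $K$ and trivial Thurston norm. Regard $(T \times I)_{\emptyset}$ as a taut sutured manifold; since the non-trivial filling $(T \times I)_{surg}$ is not taut (a boundary torus compresses), \cite[Corollary~2.4]{Ga2} says the trivial filling does not reduce Thurston norm. Hence the lifted class in $H_2((T \times I)_{\emptyset}, \partial)$ is still norm zero and is represented by an essential spanning annulus disjoint from $K$. Cutting $T$ along it yields an annulus $A$; an outermost-arc argument produces a compressing disk for $A \times \{0\}$ disjoint from the cut, and now Proposition~\ref{prop:annulus} (which \emph{does} rest on \cite{Ga3}) finishes. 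In short: the route is to use \cite{Ga2} to find a spanning annulus missing $K$ and reduce to the solid-torus case, rather than to invoke a $T \times I$ analogue of \cite{Ga3} that is not available off the shelf.
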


\begin{proof} The hypothesis guarantees that the fundamental class $[K]$ is non-trivial in $H_1(T)$ so $T \times I - \eta(K)$ is an irreducible, $\bdd$-irreducible $3$-manifold whose boundary consists of tori.  We can then regard $(T \times I)_{\emptyset} = T \times I - \eta(K)$ as a taut sutured manifold. (See \cite{Ga1}, \cite{Ga2}.)  From that point of view, $(T \times I)_{surg}$ is the sutured manifold obtained by a non-trivial filling of $\bdd \eta(K) \subset \bdd (T \times I)_{\emptyset}$, but it is not taut, since the boundary component $T \times \{ 0 \}$ compresses in  $(T \times I)_{surg}$.  

Let $c \subset T \times \{ 0 \}$ be an essential simple closed curve so that $[K]$ is a multiple of the fundamental class $[c]$ in $H_1(T \times I) $. The homology class $[c] \times [I, \bdd I] \in H_2(T \times I, \bdd (T \times I))$ is represented by the spanning annulus $c \times I$ and so has trivial Thurston norm.  Since $[K]$ is a multiple of $[c]$ in $H_1(T \times I) \cong H_1(T)$ it follows that the algebraic intersection $[K] \cdot ([c] \times [I, \bdd I])$ is trivial.  In particular, the homology class $[c] \times [I, \bdd I]$ lifts to a homology class  $\beta \in H_2((T \times I)_{\emptyset})$.  Since the non-trivial filling of $\bdd \eta(K)$ destroys tautness, it follows from \cite[Corollary 2.4]{Ga2} that trivial filling of $\bdd \eta(K)$, to get $T \times I$, does not lower the Thurston norm of $\beta$.  Hence the Thurston norm of $\beta$ is trivial.  In particular, it can be represented by spanning annuli (in fact an essential spanning annulus) in $(T \times I)_{\emptyset}$, i. e. by a spanning annulus in $T \times I$ that is disjoint from $K$.  

Since this essential spanning annulus (which we can take to be $c \times I$) in $T \times I$ is disjoint from $K$, it persists into $(T \times I)_{surg}$. A standard outermost arc argument shows that there is a compressing disk for $T \times \{ 0 \}$ in $(T \times I)_{surg}$ that is also disjoint from $c \times I$.  Thus we can apply Proposition \ref{prop:annulus} to the annulus $A = T - \eta(c)$ and reach the required conclusion.  
\end{proof}

Our goal is to prove the identical result for surgery on a knot $K \subset F \times I$ when $F$ is any compact orientable surface.  The philosophy of the proof is captured above:  Use sutured manifold theory to find an essential spanning annulus or square in $F \times I$ that is disjoint from the knot.  Cut open $F$ along this essential annulus or square to give a surface $F'$ that is simpler.  Continue until the surface becomes an annulus, and apply Proposition \ref{prop:annulus} .

A difficulty in the above approach is that for $F$ more complicated than a torus, particularly when $F$ has boundary, then \cite[Corollary 2.4]{Ga2} does not apply.  Its proof requires in an important way that whatever surface we are using to determine Thurston norm (in our context the spanning annulus or square) has its boundary lying only on tori boundary components of the ambient $3$-manifold (in our context, $F \times I$).  Overcoming this difficulty requires some trickery and the use of the central theorem of \cite{Sch}.

\section{Foundational lemmas, useful when $\chi(F) < 0$}

For $F$ a compact orientable surface and $K \subset F \times I$, let $\Fempty$ denote $F \times I - \eta(K)$ and let  $\Fz$ denote the manifold obtained from $F \times I$ by non-trivial surgery on $K$. See the schematic in Figure \ref{fig:FemptyFz}.

 \begin{figure}[ht!]
 \labellist
\small\hair 2pt
\pinlabel $F$ at 81 103
\pinlabel $I$ at 163 55
\pinlabel ${F \times I}$ at 81 9
\pinlabel ${(F \times I)_{\emptyset}}$ at 280 9
\pinlabel $\eta(K)$ at 280 108
\pinlabel ${\Fz}$ at 447 9
 
 \endlabellist
    \centering
    \includegraphics[scale=0.7]{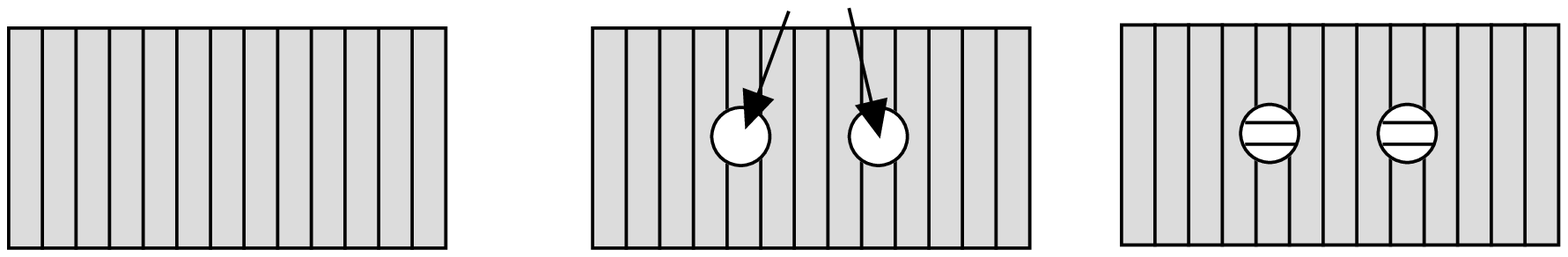}
    \caption{} \label{fig:FemptyFz}
    \end{figure}

\begin{lemma} \label{lemma:found1} Suppose $F$ is not an annulus and there is a non-trivial simple closed closed curve $c \subset F$ so that both $\czero \in \Fzero$ and $\cone \in \Fone$ bound disks in $\Fz$.  Then there is an essential spanning annulus or essential spanning square in $F \times I$ that is disjoint from $K$.
\end{lemma}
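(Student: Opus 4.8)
The plan is to study $\Fempty = F \times I - \eta(K)$ as a taut sutured manifold and to show that the hypothesis collapses the norm of the spanning class carried by $c \times I$, so that this class can be realized off $K$ only by essential spanning annuli or squares. First I would equip $\Fempty$ with the sutured structure inherited from the product $(F \times I, \bdd F \times I)$: take $R_- = \Fzero$, $R_+ = \Fone$, and let the sutures consist of $\bdd F \times I$ together with a pair of parallel copies of the surgery slope on $\bdd \eta(K)$. Granting that $\Fempty$ is irreducible and $\bdd$-irreducible (the degenerate situations, where $K$ lies in a ball or otherwise compresses a boundary face, have to be dispatched beforehand), this is taut, and non-trivial surgery on $K$ is exactly the corresponding non-trivial filling of $\bdd \eta(K)$ that produces $\Fz$.

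Next I would single out the class of the spanning annulus $c \times I$, whose boundary is $\czero \cup \cone$. Since these two curves bound disks $D_0, D_1$ in $\Fz$, the capped-off surface $D_0 \cup (c \times I) \cup D_1$ is a $2$-sphere in $\Fz$, and in particular both $R_-$ and $R_+$ compress in $\Fz$; hence the filled manifold fails to be taut, and the spanning class has its norm drop after filling. The reason for demanding that \emph{both} ends of $c \times I$ bound disks, rather than only one, is that this lets me treat the spanning surface symmetrically — the non-tautness is visible on $R_-$ and on $R_+$ simultaneously — so the norm comparison is genuinely between spanning representatives rather than between surfaces of differing boundary type on the two product faces.

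The core step then mirrors the torus argument of Proposition~\ref{prop:torus}: the non-trivial filling of $\bdd \eta(K)$ destroys tautness in this class, while the trivial filling recovers $F \times I$ and cannot lower the norm, so the class must already have vanishing norm in $\Fempty$ and be represented, disjointly from $K$, by a norm-minimizing spanning surface of zero complexity. Its essential components are spanning annuli when the representative can be isotoped off the sutures $\bdd F \times I$, and spanning squares when it must meet them in essential arcs; selecting one essential component yields the required essential spanning annulus or square in $F \times I$ disjoint from $K$.

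The main obstacle, exactly as the authors flag, is that $\bdd(c \times I) \subset \Fzero \cup \Fone$, and for $\chi(F) < 0$ these faces are not tori, so \cite[Corollary 2.4]{Ga2} — whose proof requires the norm-measuring surface to meet $\bdd$ only in tori — cannot be invoked. The argument must instead be pushed through the generalized Thurston norm of \cite{Sch}, which tolerates surface boundary on $R_{\pm}$. The real content is in checking its hypotheses in this setting: tautness of $\Fempty$, that the non-trivial filling genuinely destroys tautness in the spanning class via $D_0$ and $D_1$, and that non-triviality of the slope rules out a vacuous conclusion; reading off the annulus-versus-square dichotomy from how the minimizing surface meets $\bdd F \times I$ is then the remaining bookkeeping.
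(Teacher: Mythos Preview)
Your outline correctly isolates the difficulty --- that \cite[Corollary~2.4]{Ga2} needs the norm-measuring surface to have boundary only on tori --- but your proposed fix is a genuine gap. You assert that ``the argument must instead be pushed through the generalized Thurston norm of \cite{Sch}, which tolerates surface boundary on $R_{\pm}$,'' and then proceed as if \cite{Sch} contained an analogue of Gabai's ``at most one filling can lower the norm'' for classes with boundary on higher-genus $R_{\pm}$. It does not; the central theorem of \cite{Sch} is the cabling conclusion, and the paper invokes it only in that form (in Lemma~\ref{lemma:found2b}). No result in \cite{Sch} is cited or checked that would let you conclude the spanning class has norm zero in $\Fempty$, so the ``core step'' is unsupported. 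A secondary issue: you work with the class $[c \times I]$, but nothing in the hypothesis gives $[c]\cdot[K]=0$, so this class need not even lift to $\Fempty$; and since $c$ is closed, representatives would never meet $\bdd F \times I$, so your account of how squares arise is not coherent.

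The paper's actual trick is quite different and explains the real role of the ``both ends bound disks'' hypothesis. One glues $\Fzero$ to $\Fone$ to form $F \times S^1$, whose boundary components $\bdd F \times S^1$ are \emph{tori}, so \cite[Corollary~2.4]{Ga2} applies there. The two disks $D_0, D_1$ bounded by $\czero$ and $\cone$ now fit together to give a sphere in $\FSz$, and a short homology argument shows it is a reducing sphere; this is what makes the non-trivial filling of $\bdd\eta(K)$ non-taut. Because $F$ is not an annulus, $\operatorname{rank} H_1(F,\bdd F)\ge 2$, so one can choose $\alpha\in H_1(F,\bdd F)$ with $\alpha\cdot[K]=0$; then $\alpha\times[S^1]$ has trivial norm in $F\times S^1$, lifts to $\FSempty$, and Gabai's corollary forces it to have trivial norm there. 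Intersecting the resulting tori/annuli with the fiber $F$ cuts them into the desired essential spanning annuli or squares in $\Fempty$. Your ``symmetry'' explanation for needing both disks misses this: the point is not symmetry of the norm comparison but that the two disks must glue to a sphere after the doubling.
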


\begin{proof}    If  $\Fempty$ were reducible, then $K$ would lie inside a $3$-ball in $F \times I$, and surgery on $K$ could not make $F \times \bdd I$ compressible.  So $\Fempty$ is irreducible.  If $K$ is a satellite knot (that is, $K$ lies in a solid torus $L \subset F \times I$ so that $\bdd L$ is essential in $\Fempty$) the argument below could be applied to $L$ instead of $\eta(K)$ with the same result.  So henceforth we also assume that $K$ is not a satellite knot in $F \times I$.

 Create $F \times S^1$ by identifying $\Fzero$ with $\Fone$ and let $\FSz$ (resp. $\FSempty$) denote $\Fz$ (resp. $\Fempty$) with its boundary components $\Fzero$ and $\Fone$ identified.  Stated other ways, $\FSempty = (F \times S^1) - \eta(K)$ and $\FSz$ is the manifold obtained from $F \times S^1$ by Dehn surgery on $K \subset F \times S^1$ or the manifold obtained by non-standard filling of $\bdd \eta(K) \subset \bdd \FSempty$.   Any disk or incompressible annulus in $F \times I$ that has its entire boundary on $F \times \{ 0 \}$ or on $F \times \{ 1 \}$ is boundary parallel in $F \times I$.  It follows immediately that $\FSempty$ is also irreducible and, from the similar assumption on $\Fempty$, that $K$ is not a satellite knot in $F \times S^1$.


Let $D_0, D_1 \subset \Fz$ be disks bounded by $\czero$ and $\cone$ respectively.  Consider the sphere $S \subset \FSz$ obtained by identifying the boundaries of $D_0$ and $D_1$.  See the schematic in Figure \ref{fig:FSempty}.

 \begin{figure}[ht!]
 \labellist
\small\hair 2pt
\pinlabel ${c \times \{ 0 \}}$ at 288 10
\pinlabel ${c \times \{ 1 \}}$ at 288 145
\pinlabel $\FSempty$ at -41 78
\pinlabel $\FSz$ at 410 78
 
 \endlabellist
    \centering
    \includegraphics[scale=0.7]{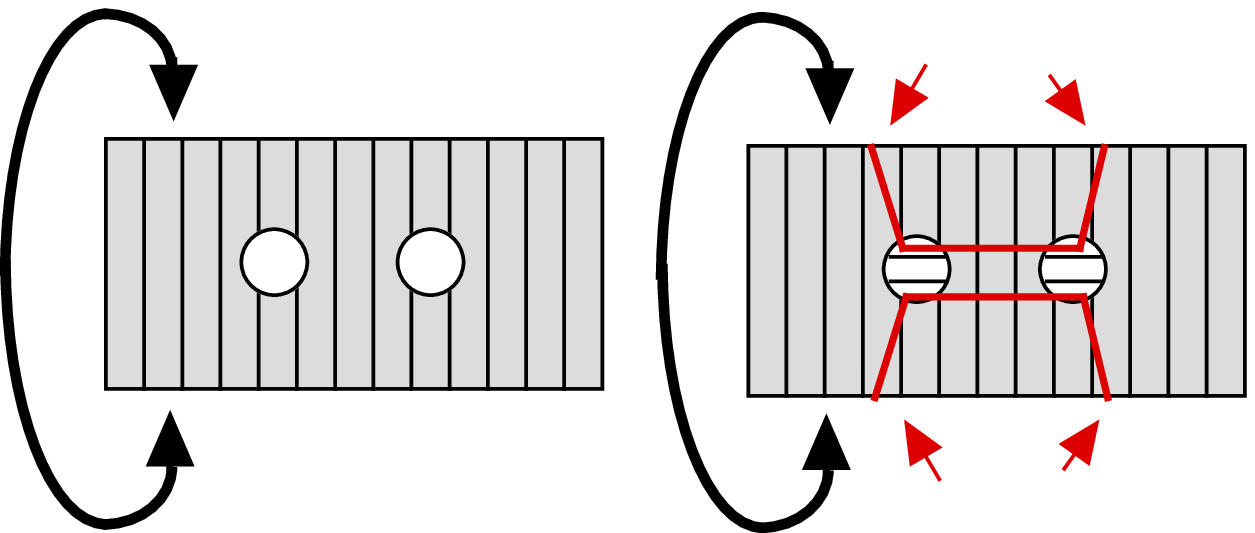}
    \caption{} \label{fig:FSempty}
    \end{figure}
    
    \bigskip

\noindent{\bf Claim:} $S$ is a reducing sphere

\bigskip

The inclusion homomorphism $H_1(F) \to H_1(\FSempty)$ is injective, since $H_1(F) \to H_1(F \times S^1)$ is.  It follows that $nullity(H_1(F) \to H_1(\FSz)) \leq 1$.  But if $S$ bounded a ball in $\FSz$ that ball would properly contain a component of $F - c$ that is bounded by $c$.  Since $c$ is non-trivial in $F$, that component would not be a disk, so it would have positive genus.  In particular, at least a rank two summand of $H_1(F)$ would be trivial in $H_1(\FSz)$.    The contradiction proves the claim.

\bigskip

Orient $K$ and $S^1$ and let $[K]$ (resp $[S^1]$) denote the homology class represented by $K$ in $H_1(F)$ (resp. $\{ point \} \times S^1$ in $H_1(F \times S^1$)). Since $F$ is not an annulus, $rank(H_1(F, \bdd  F)) \geq 2$.  By Poincare duality, there is an  $\alpha \neq 0 \in H_1(F, \bdd F)$ so that $\alpha \cdot [K] = 0$.  $\aaa$ can be represented by essential circles and arcs in $F$, so $\aaa \times [S^1] \in H_2((F, \bdd F) \times S^1)$ can be represented by essential annuli and tori.   In particular, $\aaa \times [S^1]$ has trivial Thurston norm.  Since $\alpha \cdot [K] = 0$ it follows that  $\aaa \times [S^1]$ is the image of some $\beta \in H_2(\FSempty, \bdd F \times S^1)$.  

The manifold $F \times S^1$ is an irreducible manifold with only tori boundary components, so it can be viewed as a taut sutured manifold.  Since non-trivial surgery on $K \subset F \times S^1$ gives a reducible manifold, it follows, essentially from \cite[Corollary 2.4]{Ga2} that trivially replacing $\eta(K)$ in $\FSempty$, which yields $F \times S^1$, does not reduce Thurston norm of any homology class.  In particular, $\beta$ can be represented by tori and annuli in $\FSempty$.  These tori and annuli can be isotoped to intersect the incompressible $F \subset (F \times S^1) - \eta(K)$ in circles that are parallel in the tori and and arcs that are spanning in the annuli, so the intersection of these surfaces with $\Fempty$ are properly embedded annuli and squares. Since $\aaa \times [S^1] \in H_2(F \times I)$ is non-trivial, it is easy to see that at least one of the annuli and squares in $\Fempty$ must be essential and spanning in $F \times I \supset \Fempty$.
\end{proof}

\begin{lemma}  \label{lemma:found2a} Suppose $c$ is a non-trivial curve in $F$ so that $\czero \subset \Fzero$ bounds a disk in $\Fz$. Let $\Fpempty$ be the manifold obtained from $\Fempty = F \times I - \eta(K)$ by attaching a $2$-handle to $\czero$.  Then any curve in $\Fone$ that compresses in $\Fpempty$ also compresses in $\Fz$.
\end{lemma}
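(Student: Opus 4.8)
The plan is to take a compressing disk for $\gggg$ in $\Fpempty$ and rebuild it inside $\Fz$, trading the $2$-handle for the disk that the surgery hypothesis provides. Let $\gggg \subset \Fone$ be a curve that compresses in $\Fpempty$, and let $E$ be a compressing disk for it there. Write $H$ for the $2$-handle attached to $\czero$, so $\Fpempty = \Fempty \cup H$, and let $D_0 \subset \Fz$ be the disk with $\bdd D_0 = \czero$ supplied by the hypothesis. Since the surgery and the $2$-handle are attached along the disjoint boundary pieces $\bdd \eta(K)$ and $\czero \subset \Fzero$, the manifolds $\Fz$ and $\Fpempty$ share the common piece $\Fempty$, and I will exploit this to move a disk from one to the other.

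First I would put $E$ into good position with respect to $H$. Since $\bdd E = \gggg \subset \Fone$ is disjoint from $H$, I make $E$ transverse to the cocore arc of $H$ and remove inessential intersections in the standard way, so that $E \cap H$ is a collection of meridian disks parallel to the core of $H$, each with boundary a curve parallel to $\czero$ in the attaching annulus $A(\czero) \subset \Fzero$. Then $E_0 := E \cap \Fempty$ is a planar surface properly embedded in $\Fempty$, with outer boundary $\gggg$ and with inner boundary curves that are parallel copies of $\czero$ on $\Fzero$.

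Now I regard $E_0 \subset \Fempty \subset \Fz$ and cap off its inner boundaries using $D_0$. Because the inner boundaries of $E_0$ are parallel to $\czero = \bdd D_0$ on $\Fzero$ and hence disjoint from $\bdd D_0$, the intersection $E_0 \cap D_0$ consists only of circles; a standard innermost-circle surgery of $E_0$ along subdisks of $D_0$ makes $E_0$ disjoint from $D_0$ while keeping it planar with the same boundary. With $E_0$ disjoint from $D_0$, I cap each inner boundary curve by a parallel push-off of $D_0$, choosing these push-offs nested so they are mutually disjoint and disjoint from $E_0$. The union $E'$ of $E_0$ with these capping disks is then an embedded disk in $\Fz$ with $\bdd E' = \gggg$. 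Since $\gggg$ is essential in $\Fone$ (it compresses in $\Fpempty$) and $\Fone \subset \bdd \Fz$, the disk $E'$ shows that $\gggg$ compresses in $\Fz$, as desired.

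The main obstacle is the bookkeeping in the final step: one must guarantee that the capping disks are embedded and disjoint from the rest of $E_0$. This is exactly what the innermost-circle cleanup of $E_0 \cap D_0$ together with the nesting of the parallel copies of $D_0$ is designed to secure; the earlier step of arranging $E \cap H$ to be essential core-parallel disks (so that the inner boundaries are genuinely parallel to $\czero$, not trivial in $A(\czero)$) is needed so that these copies of $D_0$ actually cap them off.
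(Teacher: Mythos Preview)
Your argument is correct and follows essentially the same approach as the paper: both intersect the compressing disk with $\Fempty$ to obtain a planar surface with outer boundary $\gggg$ and inner boundaries parallel to $\czero$, then cap off the inner boundaries using the disk $D_0$ supplied by the surgery hypothesis. The only difference is that the paper simply observes this makes $\gggg$ nullhomotopic in $\Fz$ (implicitly invoking the Loop Theorem), whereas you carry out the innermost-circle cleanup and nested capping explicitly to produce an embedded disk by hand.
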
 

\begin{proof}  Let $c' \subset F$ be a non-trivial curve so that $c' \times \{ 1 \}$ compresses in $\Fpempty$.  The intersection with $\Fempty$ of the compressing disk  is a planar surface with one boundary component parallel $c' \times \{ 1 \}$ in $\Fone$ and the other boundary components all parallel to $\czero$ in $\Fzero$.  But the latter are all null-homotopic in $\Fz$, so it follows that $c' \times \{ 1 \}$ is nullhomotopic in $\Fz$, as required.  See Figure \ref{fig:Fpempty}.
\end{proof}

 \begin{figure}[ht!]
 \labellist
\small\hair 2pt
\pinlabel ${c \times \{ 0 \}}$ at 259 20
\pinlabel ${c' \times \{ 1 \}}$ at 36 151
\pinlabel $\Fpempty$ at -33 86
\pinlabel $\Fpz$ at 375 86
 
 \endlabellist
    \centering
    \includegraphics[scale=0.7]{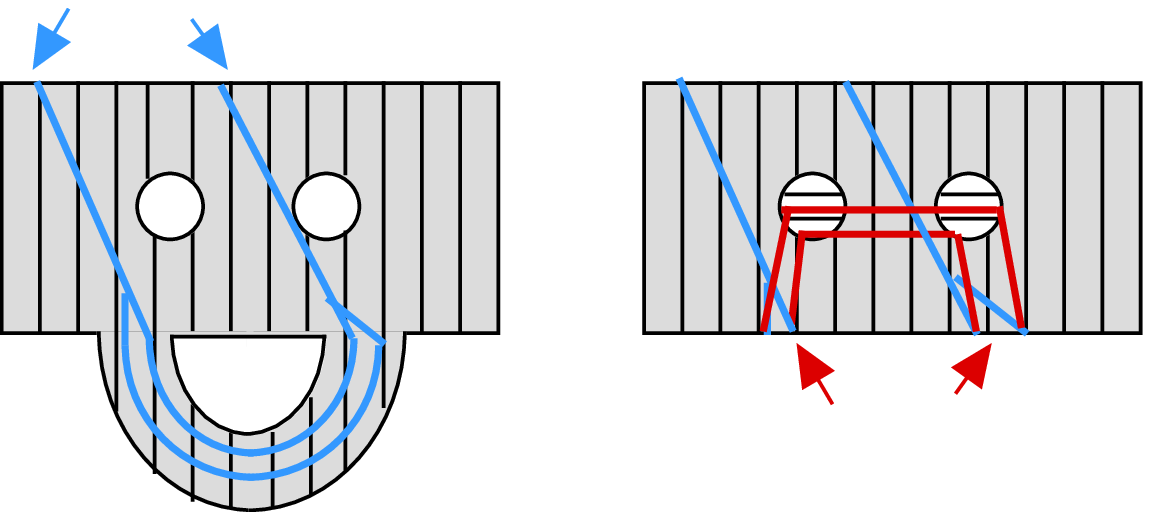}
    \caption{} \label{fig:Fpempty}
    \end{figure}

\begin{lemma}  \label{lemma:found2b} Suppose $c$ is a non-trivial separating curve in $F$ so that $\czero \subset \Fzero$ bounds a disk in $\Fz$. Let $\Fp$ (resp. $\Fpempty$) be the manifold obtained from $F \times I $ (resp $\Fempty$) by attaching a $2$-handle to $\czero$.  Then $\Fone$ is compressible in $\Fpempty$.
\end{lemma}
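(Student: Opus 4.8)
The plan is to build a compressing disk for \Fone{} in \Fpempty{} starting from the obvious one in \Fp{} and then pushing the knot out of the way with the help of the surgery disk. Write $F = F_1 \cup_c F_2$, let $A = c \times I \subset F \times I$ be the vertical (essential, spanning) annulus over $c$, and let $D$ be the core of the attached $2$-handle. After the handle is attached, $D$ caps the lower boundary \czero{} of $A$, so $\hat A = A \cup D$ is a disk with $\bdd \hat A = \cone{}$; since $c$ is non-trivial, \cone{} is essential in \Fone{}, and $\hat A$ is a compressing disk for \Fone{} in \Fp{}. The whole problem is that $A$, hence $\hat A$, may meet $K$, so $\hat A$ need not lie in the knot complement \Fpempty{}.

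Two observations organize the attack. First, because $c$ separates $F$, its class $[c]$ is zero in $H_1(F) = H_1(F \times I)$, so the algebraic intersection number $[K] \cdot [A]$ vanishes and the points of $A \cap K$ occur in oppositely-signed pairs. Second, the hypothesis supplies the disk $D_0 \subset \Fz{}$ with $\bdd D_0 = \czero{}$; isotoping $D_0$ to meet the surgery solid torus in meridian disks, its trace $P_0 = D_0 \cap \Fempty{}$ is planar with boundary \czero{} together with surgery-slope curves on $\bdd \eta(K)$, and capping \czero{} with $D$ turns $P_0$ into a planar surface $\hat P_0 \subset \Fpempty{}$ whose boundary lies entirely on $\bdd \eta(K)$. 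This $\hat P_0$ is exactly the trace in the knot complement of the sphere $S = D_0 \cup D \subset \Fpz{}$, which a rank argument in the spirit of Lemma \ref{lemma:found1} shows is a genuine reducing sphere (here $c$ non-trivial forces a component of $F - c$ not to be a disk). I would then use $\hat P_0$, together with the irreducibility of \Fempty{} (if \Fempty{} were reducible then $K$ would lie in a ball and surgery could not compress $F \times \bdd I$), to carry out innermost-disk and outermost-arc exchanges that remove the punctures of $\hat A$ against $\eta(K)$ in the cancelling pairs furnished by the first observation, yielding an embedded compressing disk for \Fone{} disjoint from $K$, i.e. lying in \Fpempty{}.

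The main obstacle is that the separating hypothesis only makes $A \cap K$ cancel \emph{homologically}: tubing a cancelling pair of punctures of $A$ along $\bdd \eta(K)$ removes the intersection but raises the genus of the spanning surface by one, so the naive cancellation produces a higher-genus surface with boundary \cone{} rather than a disk. Converting this into a genuine genus-zero compression is the technical heart, and it is here that the reducing sphere $S$ (equivalently $\hat P_0$) must be used in an essential way rather than merely homologically. I expect the clean route is to feed this configuration into the sutured-manifold / Thurston-norm machinery: attaching the $2$-handle to \czero{} is precisely the device that repairs the defect noted after Proposition \ref{prop:torus}, namely that the relevant spanning surface had boundary on the non-torus components $F \times \bdd I$, and with that boundary now capped one can hope to apply the central theorem of \cite{Sch} to the filling \Fpz{}, whose reducibility would then be reflected as a compression of \Fone{} in \Fpempty{}.
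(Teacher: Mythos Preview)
Your first approach---cap the annulus $A = c\times I$ with the $2$-handle core and then try to cancel the punctures $A\cap K$---does not produce a proof, and you correctly diagnose why: algebraic cancellation of intersection points only lets you tube, which raises genus, and neither the planar piece $\hat P_0$ nor an innermost/outermost argument against it will lower that genus back to zero in general. So the geometric portion of the proposal should be discarded rather than repaired.

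Your closing instinct, to invoke \cite{Sch} on $\Fpz$, is exactly the right tool, but what you have written is not yet an argument, and the way you phrase it (``whose reducibility would then be reflected as a compression of $\Fone$ in $\Fpempty$'') inverts the logic. The theorem of \cite{Sch} does not output a compressing disk; it outputs, under suitable hypotheses on $M-K$, the conclusion that $K$ is cabled with surgery slope the cabling slope. The paper runs this as a contradiction: assume $\Fone$ is \emph{in}compressible in $\Fpempty$, check that then $\Fpempty$ is irreducible and $\bdd$-irreducible (this is where incompressibility of $\Fone$ is used, together with the dual description of $\Fp$ as $[\bdd\Fp-\Fone]\times I$ plus one $1$-handle), observe that the sphere $D_0\cup D$ you built makes $\Fpz$ genuinely reducible, and then apply \cite{Sch}. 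The only surviving conclusion is that $K$ is cabled and the surgery is along the cabling slope, which forces a lens-space summand in $\Fp$; but $\Fp$ deformation retracts to $F_1\vee F_2$ (here the separating hypothesis on $c$ is used), so $H_1(\Fp)$ is free, a contradiction. Two further points you have not addressed: you must first dispose of the case that $\Fpempty$ is reducible (then $K$ lies in a ball and the conclusion is immediate), and when $\bdd F\neq\emptyset$ the hypotheses of \cite{Sch} are not met directly---the paper handles this by gluing a second copy of $\Fempty$ along $\bdd F\times I$ before invoking \cite{Sch}.
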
 

\begin{proof}   If $\Fpempty$ is reducible, then $K$ lies in a $3$-ball, so it follows immediately that $\cone$ compresses in $\Fpempty$.  So henceforth we may as well assume $\Fpempty$ is irreducible.

Suppose, towards a contradiction, that $\Fone$ is incompressible in $\Fpempty$.  $\Fp$ can be dually viewed as the manifold obtained from $[\bdd \Fp - \Fone] \times I$ by attaching a single $1$-handle. It follows that $\bdd \Fp - \Fone$ is incompressible in  $\Fp$ and so also in $\Fpempty$.  Combining, we have that $\bdd \Fp - (\bdd F \times I)$ is incompressible in $\Fpempty$.

\bigskip

\noindent{\bf Case 1:} F is closed, so in fact $\bdd \Fp$ is incompressible in $\Fpempty$.

\bigskip

Apply \cite{Sch}, with $M$ the manifold $\Fpz$ and $M' = \Fp$.  By the hypothesis of this case, $M - K = \Fpempty$ is irreducible and $\bdd$-irreducible.  On the other hand, since $\czero$ bounds a disk in $\Fz$,  $M = \Fpz$ contains a $2$-sphere that passes exactly once through the $2$-handle, so the $2$-sphere does not bound a rational homology ball.  Since $M' = \Fp$ is $\bdd$-reducible, the only possible conclusion from \cite{Sch} is that $K$ is cabled in $M$, with surgery slope the slope of the cabling annulus.  But the effect of such a surgery would be to create a Lens space summand in $M' = \Fp$ and this is impossible for simple homology reasons we now describe.

Since $c$ is separating $c$ describes a connected sum decomposition $F = F _1 \#_c F_2$.  Moreover, the manifold $\Fp$ deformation  retracts to the $1$-point union $F_1 \vee F_2$.  Hence $H_1(\Fp) \cong H_1(F_1 \vee F_2)$ is free.

\bigskip

\noindent{\bf Case 2:} F has boundary and $\bdd \Fp - (\bdd F \times I)$ is incompressible in $\Fpempty$.

\bigskip

In this case let $M$ be the manifold obtained by attaching a copy of $\Fempty$ to $\Fpz$ along $\bdd F \times I$ and $M'$ be the manifold obtained by attaching a copy of $\Fempty$ to $\Fp$ along $\bdd F \times I.$  Observe that $M-K$ is the union of $\Fempty$ with $\Fpempty$ along $\bdd F \times I$.  Both $\Fempty$ and $\Fpempty$ are irreducible and the complement of $\bdd F \times I$ in the boundary of both $\Fempty$ with $\Fpempty$ is incompressible.  It follows from an innermost disk, outermost arc argument that $M - K$ is irreducible and $\bdd$-irreducible.  Now apply \cite{Sch}, obtaining essentially the same contradiction as in the previous case.
\end{proof}


\begin{lemma}  \label{lemma:found3} Suppose both $\Fzero$ and $\Fone$ are compressible in $\Fz$.  Then there is a non-trivial simple closed curve $c \subset F$ so that both $\czero$ and $\cone$ bound disks in $\Fz$.
\end{lemma}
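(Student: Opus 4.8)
The plan is to extract honest compressing disks at the two ends and compare them through the surgery. Let $D_0, D_1 \subset \Fz$ be compressing disks for \Fzero{} and \Fone, with essential boundaries $c_0\times\{0\}$ and $c_1\times\{1\}$. Since their boundaries lie on different components of $\bdd\Fz$, the intersection $D_0\cap D_1$ consists only of circles, and an innermost-disk swap (which needs no irreducibility hypothesis) makes $D_0$ and $D_1$ disjoint. Writing $V$ for the surgery solid torus, so that $\Fz = \Fempty \cup V$, I would isotope each $D_i$ into normal form with respect to $V$: because \Fzero{} and \Fone{} are incompressible in $F\times I$ and hence in \Fempty, each disk must meet $V$, and after discarding $\bdd$-parallel intersections each $D_i$ meets $V$ in $n_i\ge 1$ meridian disks. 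Thus $P_i = D_i\cap\Fempty$ is a planar surface disjoint from $K$ with one boundary circle $c_i\times\{i\}$ and $n_i$ further boundary circles, all parallel copies of the surgery slope $\gamma$ on $\bdd V$. A first payoff is homological: since $P_i$ shows $[c_i\times\{i\}]$ is a multiple of $[\gamma]$ in $H_1(\Fempty)$, and $[\gamma]$ maps to a multiple of $[K]$ in $H_1(F\times I)=H_1(F)$, both $[c_0]$ and $[c_1]$ are multiples of $[K]$ in $H_1(F)$; this is the homological shadow of the parallelism the theorem predicts.

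The geometric heart is to convert $P_0$ and $P_1$ into a spanning surface. All the slope-$\gamma$ boundary circles of $P_0$ and $P_1$ are parallel on the torus $\bdd V$, so they cut $\bdd V$ into annuli, and I would glue copies of these annuli to $P_0\cup P_1$ so as to cap off every $\gamma$ circle, producing a surface $\Sigma\subset\Fempty$, disjoint from $K$, whose only boundary is $c_0\times\{0\}\cup c_1\times\{1\}$ --- a spanning surface for $F\times I$. Maximally compressing $\Sigma$ inside \Fempty{} (compressions fix the boundary) yields an incompressible spanning surface; since $c_0$ is essential in $F$ its two boundary circles cannot be separated onto distinct components by a compression, and a connected incompressible, $\bdd$-incompressible spanning surface in $F\times I$ with one boundary circle on each end is isotopic to a vertical annulus $c\times I$. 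Such an annulus forces $c_0$, $c$ and $c_1$ to be isotopic in $F$, after which isotoping $c_1\times\{1\}$ to $c_0\times\{1\}$ within \Fone{} and carrying $D_1$ along produces a disk bounded by $c_0\times\{1\}$; then $c=c_0$ is the desired curve.

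The main obstacle is exactly the bookkeeping hidden in the capping step: when $n_0+n_1 > 2$ the slope circles must be paired by disjoint annuli on $\bdd V$, which meets parity and coherent-orientation issues, and the resulting $\Sigma$ will in general carry extra genus that must be compressed away without inadvertently separating the two spanning boundary circles or pushing a compressing disk back into $V$. I expect the clean case $n_0=n_1=1$ to go exactly as above, and the general case to require either reducing the intersection number (choosing $D_i$ to meet the core of $V$ minimally and exploiting the irreducibility of \Fempty{} established in the proof of Lemma~\ref{lemma:found1}) or allowing the common curve to be a \emph{primitive} curve in the direction of $[K]$ rather than $c_0$ or $c_1$ themselves. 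A useful preliminary normalization is to arrange $c_0$ to be separating: if a chosen compressing curve is non-separating, banding two oppositely oriented parallel copies of it (and the corresponding parallel copies of $D_0$) yields a separating essential curve that still bounds a disk in \Fz, which should simplify the orientation bookkeeping and dovetails with the separating hypothesis of Lemma~\ref{lemma:found2b}.
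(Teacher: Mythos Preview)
Your plan is genuinely different from the paper's, and the gap you flag is real and more serious than the capping bookkeeping.  The paper never tries to assemble a spanning surface from $D_0$ and $D_1$.  Instead it chooses a \emph{separating} compressing curve $c$ for $\Fzero$, attaches a $2$-handle along $\czero$ to form $\Fpempty$, and invokes Lemma~\ref{lemma:found2b} (whose proof rests on the reducibility theorem of \cite{Sch}) to conclude that $\Fone$ compresses in $\Fpempty$.  Because $\Fp$ deformation retracts to the wedge $F_1\vee F_2$ determined by $c$, a short $\pi_1$ argument in the free product $\pi_1(F_1)*\pi_1(F_2)$ forces any curve in $F$ that is null-homotopic in $\Fp$ to be parallel to $c$; hence the compressing curve on $\Fone$ is $c$ itself, and Lemma~\ref{lemma:found2a} carries the disk back into $\Fz$.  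So the heavy lifting is done by \cite{Sch}, not by surface surgery.

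The specific place your argument breaks is the compression step, not the capping.  You compress $\Sigma$ inside $\Fempty$, but the classification you appeal to (``a connected incompressible, $\bdd$-incompressible spanning surface in $F\times I$ with one boundary circle on each end is a vertical annulus'') is a statement about $F\times I$; a surface incompressible in $\Fempty$ may still compress in $F\times I$ through a disk that meets $K$, so you cannot invoke that classification.  Your claim that essentiality of $c_0$ prevents a compression from separating the two boundary circles is also unjustified: if $c_0$ is separating (as you propose normalizing to), then $c_0\times\{0\}$ already bounds a subsurface of $\Fzero$ pushed into the interior, so nothing stops a compression from splitting off a component carrying only $c_0\times\{0\}$.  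Finally, you have not arranged $\bdd$-incompressibility, and without it even a connected incompressible spanning surface need not be an annulus.  In the clean case $n_0=n_1=1$ your $\Sigma$ is literally an annulus and these objections evaporate, but reducing to that case is precisely the content the Thurston-norm argument of \cite{Ga2} (or, in the paper's route, the theorem of \cite{Sch}) supplies; there is no evident elementary way around it.
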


\begin{proof}  This is obvious if $F$ is an annulus and a simple homology argument establishes the result when $F$ is a torus, so we take $\chi(F) < 0$.  By hypothesis there is a simple closed curve $c$ in $F$ so that $\czero \subset \Fzero$ bounds a disk in $\Fz$ but not in $\Fzero$.  Since $\chi(F) < 0$, we may as well take $c$ to be separating.  As in the proof of Lemma \ref{lemma:found2b}, let $\Fp$ (resp. $\Fpempty$) be the manifold obtained from $F \times I $ (resp $\Fempty = F \times I - \eta(K)$) by attaching a $2$-handle to $\czero$.  Following Lemma \ref{lemma:found2b}, there is a non-trivial simple closed curve $c' \subset F$ so that $c' \times \{1 \}$ bounds a disk in $\Fpempty$.  Isotope $c'$ in $F$ so that it intersects $c$ minimally.  

We first show that $c'$ is parallel to $c$ in $F$.  As noted in the proof of Lemma \ref{lemma:found2a}, since $c$ is separating it describes a connected sum decomposition $F = F _1 \#_c F_2$ and $\Fp$ deformation  retracts to the $1$-point union $F_1 \vee F_2$.      If $c'$ were disjoint from $c$ but not parallel to $c$ then $c'$ would represent a non-trivial element in one of the $\pi_1(F_i)$ and so could not be null-homotopic in $\pi_1(\Fp)$.  This contradicts the definition of $c'$ as a curve null-homotopic in $\Fpempty \subset \Fp$.  Similarly, if $c'$ intersects $c$, then each arc of $c' - c$, being essential in one of the punctured surfaces $F - c$, also represents a non-trivial element in either $\pi_1(F_1)$ or $\pi_1(F_1)$.  This describes $c'$ as a non-trivial word in the free product $\pi_1(F_1) * \pi_1(F_2) \cong \pi_1(F_1 \vee F_2) \cong \pi_1(\Fp)$, with the same contradiction.  

The only remaining possibility is that $c' \times \{1 \}$ is isotopic to $\cone$ in $\Fone$ so $\cone$ also bounds a disk in $\Fpempty$. The result then follows from Lemma \ref{lemma:found2a}.  \end{proof}

\section{The main theorem}

\noindent {\bf Theorem \ref{thm:main}} {\em  Suppose $F$ is a compact orientable surface, $K$ is a knot in $F \times I$, and $(F \times I)_{surg}$ is the $3$-manifold obtained by some non-trivial surgery on $K$.  If $F \times \{ 0 \}$ compresses in $(F \times I)_{surg}$, then $K$ is parallel to an essential simple closed curve in $F \times \{ 0 \}$.   Moreover, the annulus that describes the parallelism determines the slope of the surgery.}

\medskip

\begin{proof}  We may as well assume $F$ is connected and the hypothesis guarantees that $K$ is not contained in a $3$-ball, so $F \times I - \eta(K)$ is irreducible. The proof is by induction on $rank(H_1(F))$. The case $rank(H_1(F)) = 1$ is covered by Propositions \ref{prop:annulus} and \ref{prop:torus}, so we henceforth assume that $rank(H_1(F)) \geq 2$ i. e. $\chi(F) < 0$.  

Since $F \times \{ 0 \}$ compresses in $(F \times I)_{surg}$ and $\chi(F) < 0$, there is a non-trivial separating simple closed curve in $F \times \{ 0 \}$ that bounds a disk in $(F \times I)_{surg}$.  Following Lemmas \ref{lemma:found2a} and \ref{lemma:found2b}, $\Fone$ is also compressible in $(F \times I)_{surg}$.  Following Lemmas  \ref{lemma:found3} and \ref{lemma:found1} there is an essential properly embedded arc or simple closed curve $\aaa \subset F$ so that $K$ is disjoint from $\aaa \times I \subset F \times I$.  A standard innermost disk, outermost arc argument shows that there is a compressing disk for $F \times \{ 0 \}$ in $(F \times I)_{surg}$ that is also disjoint from $\aaa \times I$.  It follows that the hypothesis is still satisfied for $F' = F - \eta(\aaa)$. By inductive assumption (if $\aaa$ is separating, consider just the component of $F' \times I$ that contains $K$), the theorem is true for $K \subset F' \times I$.  It follows that $K$ is parallel in $F' \times I \subset F \times I$ to an essential simple closed curve in $F' \times \{0 \} \subset F \times \{ 0 \}$, as required.
\end{proof}

\section{An application: surgery on manifolds fibering over the circle}

Ying-Qing Wu has pointed out to us that the arguments above easily give this companion theorem to \ref{thm:main}:

\begin{thm} \label{thm:redmain} Suppose $F$ is a compact orientable surface, $K$ is a knot in $F \times I$, and $(F \times I)_{surg}$ is the $3$-manifold obtained by some non-trivial surgery on $K$. If $(F \times I)_{surg}$ is reducible, then either 
\begin{enumerate}

\item $K$ lies in a ball
\item $K$ is cabled and the surgery slope is that of the cabling annulus or
\item $F$ is a torus, $K$ is parallel to an essential simple closed curve in $F \times \{ 0 \}$, and the annulus that describes the parallelism determines the surgery slope.  
\end{enumerate}
\end{thm}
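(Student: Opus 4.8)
The plan is to apply the central theorem of \cite{Sch} once, to the pair $M' = F \times I$ and its surgered version $M = \Fz$, and then to sort the resulting cases by an elementary analysis of $2$-handle additions to $F \times I$. I may assume $F$ is connected. If $F$ is a disk then $F \times I$ is a ball containing $K$ and conclusion (1) holds; the case $F = S^2$ can be handled directly, so I assume $F$ is neither a disk nor a sphere. I may also assume $\Fempty = F \times I - \eta(K)$ is irreducible: otherwise $K$ lies in a $3$-ball and (1) holds, just as in the opening line of the proof of Lemma \ref{lemma:found1}.

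Next I would verify that $\Fempty$ is $\bdd$-irreducible, supplying, with its irreducibility, the hypotheses of \cite{Sch}. Because $F$ is neither a disk nor a sphere, each of $\Fzero$, $\Fone$ and the components of $\bdd F \times I$ is incompressible in $F \times I$ --- a curve bounding a disk in $F \times I$ already bounds one in $F$ --- hence incompressible in the submanifold $\Fempty$; and $\bdd \eta(K)$ is incompressible, since a compression would place $K$ in a ball, contradicting irreducibility of $\Fempty$. As $F \times I$ is itself irreducible (so the reducing sphere of $\Fz$ cannot already be present before surgery) and $\Fz$ is reducible, I expect \cite{Sch} to yield a dichotomy: either $K$ is cabled with surgery slope the slope of the cabling annulus, giving conclusion (2) and we are done; or there is an essential annulus in $\Fempty$ from a curve of the surgery slope on $\bdd \eta(K)$ to an essential curve $d$ on $\bdd(F \times I)$. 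In the latter case, capping the $\bdd\eta(K)$ end with a meridian disk of the surgery solid torus gives a compressing disk in $\Fz$ for the boundary component carrying $d$, presents $\Fz$ as $F \times I$ with a $2$-handle attached along $d$, and shows $K$ parallel to $d$.

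I would then locate $d$. If $d$ lay on $\bdd F \times I$, then $d$ would be parallel to a component of $\bdd F$, and the $2$-handle would cap that component off with a disk, giving $\Fz \cong \hat{F} \times I$, where $\hat{F}$ is $F$ with that boundary circle filled. Since $F$ is not a disk, $\hat{F}$ is not a sphere, so $\hat{F} \times I$ is irreducible, contradicting the hypothesis that $\Fz$ is reducible; hence $d$ cannot lie there. Therefore $d$ lies on $\Fzero$ or $\Fone$, so that face compresses in $\Fz$, and Theorem \ref{thm:main} applies: $K$ is parallel to the essential simple closed curve $d$, with the parallelism annulus determining the slope.

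Finally I would pin down the surface. Since $\Fz$ is $F \times I$ with a $2$-handle along the essential curve $d \subset \Fzero$, the boundary component of $\Fz$ coming from $\Fzero$ is the surface obtained by compressing $F$ along $d$. For an essential $d$ this surface is a $2$-sphere precisely when $F$ is a torus (and then $d$ is automatically non-separating); pushing that sphere into the interior gives a reducing sphere, consistent with the reducibility hypothesis, and conclusion (3) holds. In every other case the compressed surface is not a $2$-sphere, attaching a $2$-handle along an essential curve in the incompressible boundary of the irreducible manifold $F \times I$ leaves it irreducible, and $\Fz$ would be irreducible, contradicting the hypothesis (for $F$ an annulus the same reasoning applies, a $2$-handle on a core curve yielding a ball or a solid torus). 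Thus $F$ must be a torus. The step I expect to be the main obstacle is extracting the dichotomy from \cite{Sch} cleanly: confirming that the reducing sphere in $\Fz$ is of the kind to which that theorem applies, and that the non-cabling alternative really is an essential annulus from $\bdd \eta(K)$ to $\bdd(F \times I)$ at the surgery slope. Once that annulus is in hand, the location analysis of its far end and the torus identification are routine $2$-handle computations.
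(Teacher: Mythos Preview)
Your instinct that the main obstacle is ``extracting the dichotomy from \cite{Sch} cleanly'' is correct, and the obstacle is real: the dichotomy you want is not what \cite{Sch} provides. As the paper itself invokes it (see the proof of Lemma~\ref{lemma:found2b}), the theorem of \cite{Sch} compares a nontrivial filling that is reducible against a trivial filling that is $\bdd$-\emph{reducible}, and under irreducibility and $\bdd$-irreducibility of $\Fempty$ concludes that $K$ is cabled. There is no alternative in \cite{Sch} producing an essential annulus from $\bdd\eta(K)$ to $\bdd(F\times I)$. When $F$ has boundary, $F\times I$ is a handlebody, hence $\bdd$-reducible, and \cite{Sch} applies exactly as you (and the paper) say. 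But when $F$ is closed, $F\times I$ has incompressible boundary, the $\bdd$-reducibility hypothesis fails, and \cite{Sch} is silent. Concretely: for $F=T^2$ and $K$ a core curve, $K$ is not cabled yet the relevant surgery is reducible, so no statement of the shape ``$\Fempty$ irreducible and $\bdd$-irreducible, $\Fz$ reducible $\Rightarrow$ $K$ cabled or annulus to the boundary'' can be read off from \cite{Sch} alone without further input.

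The paper supplies that input by a genuinely different mechanism in the closed case. It uses Gabai's Thurston-norm result \cite[Corollary~2.4]{Ga2} (as in Lemma~\ref{lemma:found1} and Proposition~\ref{prop:torus}) to locate an essential spanning annulus in $F\times I$ disjoint from $K$, and then either cuts along it to reach the bounded-$F$ case (where \cite{Sch} now does apply) or, via an innermost-circle argument on a reducing sphere, shows that $F\times\{0\}$ compresses in $\Fz$ and invokes Theorem~\ref{thm:main}. A smaller but related issue: once $K$ is known to be parallel to $d\subset\Fzero$, the manifold $\Fz$ is not ``$F\times I$ with a $2$-handle attached along $d$'' --- that compression body $H$ has the wrong boundary. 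The paper identifies $\Fz$ as the \emph{double} of $H$ along $\bdd_-H$, and it is the irreducibility of this double (when $\bdd_-H$ is not a sphere) that forces $F$ to be a torus. Your conclusion there is correct, but the manifold you analyze to reach it is $H$, not $\Fz$.
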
    

\begin{proof}  As previously, let $\Fempty = F \times I - \eta(K).$  If $\Fempty$ is reducible then $K$ lies in a ball, option 1.  So henceforth we assume that $\Fempty$ is irreducible.

If $F$ has boundary, so  $F \times I$ is just a handlebody then \cite{Sch}, immediately implies that $K$ is cabled and the surgery slope is that of the cabling annulus, option 2.  

Suppose $K$ is a satellite knot, so $K$ lies in a solid torus $L \subset F \times I$ with $\bdd L$ essential in $\Fempty$.  We may as well take $L$ to be maximal with this property, so $L$ itself is not a satellite.    If some reducing sphere for $(F \times I)_{surg}$ can be isotoped inside of $L_{surg}$ then \cite{Sch} again leads to option 2.  On the other hand, if there is a reducing sphere for $(F \times I)_{surg}$ that cannot be isotoped to lie entirely inside $L_{surg}$, then the argument below could be applied to $L$ instead of $\eta(K)$. That would lead to the same contradiction with \cite{Ga3} as in Proposition \ref{prop:annulus}: the surgery slope on $\bdd L$ would be a longitude of $L$ so the winding number of $K$ in $L$ would have to be $1$.  In that case $K$ would be a core of $L$, not a satellite. Following these remarks, we are left with the case in which $F$ is closed and $K$ is not a satellite knot.

In this case, the argument of Lemma \ref{lemma:found1} (or the corresponding part of the argument in Proposition \ref{prop:torus} when $F$ is a torus) shows that there is an essential spanning annulus $A$ in $F \times I$ that is disjoint from $K$. Let $F' = F - \eta(A)$.  If $A$ is disjoint from some reducing sphere in $(F \times I)_{surg}$ then $K \subset F' \times I$, and we are done by the previous case.  On the other hand, if every reducing sphere in $(F \times I)_{surg}$ does intersect $K$, then an innermost circle argument shows that the core curve of $A$ must bound a disk in $(F \times I)_{surg}$ so in particular the end of $A$ in $F \times \{0\}$ compresses in $(F \times I)_{surg}$.  It follows then from Theorem \ref{thm:main} that $K$ is parallel to an essential simple closed curve in $F \times \{ 0 \}$ and the annulus that describes the parallelism determines the surgery slope.  This is (almost) option 3.  It remains only to show that $F$ is  a torus.  

Here is another way of viewing $(F \times I)_{surg}$ in this last case:  Let $c \subset F$ be the simple closed curve to which $K$ is parallel.  Consider the compression-body $H$ obtained by attaching a $2$-handle to $F \times I$ along $c \times \{0\}$.  Then $\bdd H$ consists of $\bdd_+H = F \times \{1\}$ and $\bdd_-H = \bdd H - \bdd_+H$, the $1$- or $2$-component surface obtained by compressing $F$ along $c$.  It is easy to see that $(F \times I)_{surg}$ can be viewed as the double of $H$ along $\bdd_-H$.  

It is elementary to check that $\bdd_- H$ is incompressible in $H$ and, unless $\bdd_- H$ is itself a sphere, $H$ is irreducible.  Hence $(F \times I)_{surg}$, the double of $H$ along $\bdd_-H$, is irreducible unless $\bdd_-H$ is a sphere.  But if $\bdd_-H$ is a sphere, then it must have been obtained by compressing a torus, so $F$ must be a torus. \end{proof}

This result, together with Theorem \ref{thm:main}, leads immediately to 

\begin{cor} \label{cor:main}   Suppose $M$ is a compact orientable $3$-manifold that fibers over the circle.  If surgery on $K \subset M$ yields a reducible manifold, then either 
\begin{enumerate}
\item the projection $K \subset M \to S^1$ has non-trivial winding number,
\item $K$ lies in a ball,
\item $K$ is cabled and the surgery slope is that of the cabling annulus, or
\item $K$ lies in a fiber and the fiber determines the surgery slope.
\end{enumerate}
\end{cor}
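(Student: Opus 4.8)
The plan is to reduce everything to the setting of Theorems \ref{thm:main} and \ref{thm:redmain} by cutting the fibered manifold open along a single fiber. Write $M$ as the mapping torus of a monodromy $\phi \colon F \to F$, with fibration $p \colon M \to S^1$; the winding number of $K$ is the degree of $p|_K$, equivalently the algebraic intersection number of $[K]$ with a fiber. If this winding number is non-trivial we are in conclusion (1) and there is nothing to prove, so assume it is zero. Then $p|_K \colon S^1 \to S^1$ has degree zero, and after making $K$ transverse to a regular fiber $F_0 = p^{-1}(t_0)$ the algebraically-cancelling intersection points of $K \cap F_0$ can be removed in pairs by finger moves across $F_0$, using that $F$ is connected. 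Thus $K$ may be isotoped off $F_0$, and cutting $M$ along $F_0$ identifies the complement with $F \times I$, so that $K \subset F \times I$.

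Since the surgery is supported in a neighborhood of $K$, hence away from $F_0$, the surgered manifold $(F \times I)_{surg}$ is recovered by cutting $M_{surg}$ along the persisting fiber $F_0$ (equivalently, $M_{surg}$ is obtained from $(F \times I)_{surg}$ by regluing $F \times \{0\}$ to $F \times \{1\}$ along $\phi$). I would then split into two cases according to the behaviour of $F_0$ in the reducible manifold $M_{surg}$. If $F_0$ compresses in $M_{surg}$, take a compressing disk meeting $F_0$ minimally; an innermost-circle argument makes its interior disjoint from $F_0$, so it lies on one side and exhibits $F \times \{0\}$ (or $F \times \{1\}$) as compressible in $(F \times I)_{surg}$. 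Theorem \ref{thm:main} then says $K$ is parallel to an essential simple closed curve in the fiber, so $K$ lies in a fiber with the parallelism annulus determining the slope, which is conclusion (4).

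If instead $F_0$ is incompressible in $M_{surg}$, a reducing sphere can be isotoped to be disjoint from $F_0$: minimizing intersections, an innermost disk on the sphere is capped by a disk in $F_0$ via incompressibility, and surgering the sphere along that disk produces spheres with fewer intersections, at least one of which remains essential since the original did not bound a ball. The resulting reducing sphere lies in $(F \times I)_{surg}$, so that manifold is reducible and Theorem \ref{thm:redmain} applies. Its three conclusions translate directly to those of the corollary: $K$ in a ball gives (2); $K$ cabled with the cabling slope gives (3); and the torus case, in which $K$ is parallel to an essential curve in the fiber, again gives (4).

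The step I expect to be the main obstacle is the bookkeeping connecting the reducibility of $M_{surg}$ to a statement about $(F \times I)_{surg}$ across these two cases. In particular, the argument that a reducing sphere can be pushed off the incompressible fiber $F_0$ must be handled with care, since $M_{surg}$ is \emph{not} assumed irreducible, so one must verify that the sphere produced with fewer intersections is genuinely essential rather than an artefact of the surgery (and likewise that the compressing disk in the other case is essential). The initial isotopy of $K$ off a fiber when the winding number vanishes is routine, but it should be recorded carefully so that the slope statements in conclusions (3) and (4) survive the cut-and-reglue identification.
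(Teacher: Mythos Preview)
Your overall two-case split --- fiber compresses in $M_{surg}$ versus fiber incompressible --- and the subsequent appeals to Theorems \ref{thm:main} and \ref{thm:redmain} match the paper's argument. The genuine gap is the step you call ``routine'': winding number zero does \emph{not} let you isotope $K$ off a fiber by finger moves. To cancel a pair of intersection points of $K$ with $F_0$ you would need an embedded bigon whose boundary consists of an arc of $K$ and an arc in $F_0$, disjoint from the rest of $K$; no such disk need exist. Concretely, regard the solid torus $D^2 \times S^1$ as fibered over $S^1$ with fiber $D^2$ and let $K$ be the Whitehead pattern. Then $K$ has winding number zero, does not lie in a ball in the solid torus, yet cannot be isotoped off a meridian disk --- otherwise the Whitehead link would be split. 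Degree zero of $p|_K$ gives only a \emph{homotopy} of $K$ off the fiber, not an isotopy.

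The paper closes this gap by using the reducibility hypothesis itself. After excluding options (1) and (2) and disposing of the satellite case, $M - \eta(K)$ is a taut sutured manifold; since the non-trivial filling of $\bdd\eta(K)$ is not taut, \cite[Corollary 2.4]{Ga2} says the trivial filling does not lower the Thurston norm of $[F] \in H_2(M,\bdd M)$. Hence $[F]$ is represented in $M - \eta(K)$ by a surface of the same Euler characteristic as $F$, and in a fibered manifold any such norm-minimizing representative is isotopic to a fiber. That is the mechanism that legitimately places $K$ inside $F \times I$; from there your case analysis is essentially the paper's own.
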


\begin{proof}  We assume that options 1 and 2 are not the case, so $M - \eta(K)$ is irreducible and $K$ has trivial algebraic intersection with a fiber $F$.  As in the proof of Theorem \ref{thm:redmain}, if $K$ is a satellite knot then $K$ is cabled with surgery slope that of the cabling annulus, option 3.  So henceforth we further assume that $K$ is not a satellite knot.  

Let $M_{surg}$ denote the manifold obtained from $M$ by non-trivial surgery on $K$.  We can view $M - \eta(K)$ as a taut sutured manifold and the hypothesis is that when $\bdd \eta(K)$ is filled in some non-trivial way to create $M_{surg}$, then the result is reducible.  It follows from \cite[Corollary 2.4]{Ga2} that filling $\bdd \eta(K)$ in a trivial way does not reduce the Thurston norm of $[F] \in H_2(M, \bdd M)$, so there is a surface homeomorphic to $F$ that represents $[F]$ and which is disjoint from $K$.  But in a fibered manifold such as $M$, any surface homeomorphic to $F$ and representing $[F]$ is properly isotopic in $M$ to $F$.  Put  another way, $K$ may be isotoped so that $K \subset (M - \eta(F)) \cong F \times I$.  If there is a reducing sphere for $M_{surg}$ that lies in $M_{surg} - \eta(F)$ then the result follows from Theorem \ref{thm:redmain}.  If no reducing sphere for $M_{surg}$ can be isotoped to lie in $M_{surg} - \eta(F)$ then an innermost disk argument on a reducing sphere for $M_{surg}$ shows that $F$ compresses in $M_{surg}$.  In that case, the result follows from Theorem \ref{thm:main}. \end{proof}

 \bibliography{mybibliounique}
 \bibliographystyle{plain}

\end{document}